\def\be{\begin{equation}}
\def\ee{\end{equation}}
\def\ba*{\begin{eqnarray*}}
\def\ea*{\end{eqnarray*}}
\newcommand{\vG}{\varGamma}
\newcommand{\ve}{\varepsilon}
\newcommand{\wt}{\widetilde}
\newcommand{\vO}{\varOmega}
\newcommand{\vS}{\varSigma}
\newcommand{\ov}{\overline}
\newcommand{\N}{\mathbb{N}}
\newcommand{\R}{\mathbb{R}}
\newcommand{\mcL}{{\mathcal L}}
\newcommand{\mcS}{{\mathcal S}}
\newcommand{\mcI}{{\mathcal I}}
\newcommand{\mcH}{\mathcal H}
\newcommand{\emp}{\emptyset}
\newcommand{\vPh}{\varPhi}
\newcommand{\mcM}{\mathcal M}
\def\vD{\varDelta}
\providecommand{\keywords}[1]{\textbf{\textit{Index terms---}} #1}
\newtheorem{thm}{Theorem}[section]
\newtheorem{rem}[thm]{Remark}
\newtheorem{prop}[thm]{Proposition}
\newtheorem{prob}[thm]{Problem}
\begin{document}
\begin{flushright}
{\em \small Dedicated to Prof. Domenico Candeloro:\\No one dies on Earth, as long as he lives in the heart of those who remain\\  \dag \,  May, 3, 2019\\ }
\end{flushright}

\title[Integration of multifunctions with closed  convex ...]{Integration of multifunctions with closed \\ convex values in arbitrary Banach spaces
}

\author{D. Candeloro}
\address{Department of Mathematics and Computer Sciences - University of Perugia,  06123 Perugia (Italy), Orcid Id: 0000-0003-0526-5334}
\email{domenico.candeloro@unipg.it}
\author{L. Di Piazza}
\address{Department of Mathematics and Computer Sciences, University of Palermo,  Via Archirafi 34, 90123 Palermo (Italy), Orcd Id: 0000-0002-9283-5157 }
\email{luisa.dipiazza@unipa.it}
\author{ K. Musia{\l}}
\address{Institut of Mathematics, Wroc{\l}aw University,  Pl. Grunwaldzki  2/4, 50-384 Wroc{\l}aw (Poland), Orcid Id: 0000-0002-6443-2043}
 \email{musial@math.uni.wroc.pl}
\author{A. R. Sambucini}
\address{Department of Mathematics and Computer Sciences - University of Perugia, 06123 Perugia (Italy) , Orcid Id: 0000-0003-0161-8729}
 \email{anna.sambucini@unipg.it}
\thanks{\\
This research was partially supported by Grant  "Metodi di analisi reale per l'appros\-simazione attraverso operatori discreti e applicazioni” (2019) of GNAMPA -- INDAM (Italy), by University of Perugia --  Fondo Ricerca di Base 2019 ``Integrazione, Approssimazione, Analisi Nonlineare e loro Applicazioni'', by University of Palermo  --  Fondo Ricerca di Base 2019 and by Progetto Fondazione Cassa di Risparmio cod. nr. 2018.0419.021 (title: Metodi e Processi di Intelligenza artificiale per lo sviluppo di una banca di immagini mediche per fini diagnostici (B.I.M.)).
}
\begin{abstract}

 Integral properties of multifunctions with closed convex values  are studied.
 In this more general framework not all the tools and the technique used for weakly compact
convex valued multifunctions work. We prove that positive Denjoy-Pettis integrable multifunctions are Pettis integrable
and we obtain
 a full description of McShane  integrability in terms of Henstock and Pettis integrability, finishing the problem started by Fremlin \cite{f1994}.
\end{abstract}
\keywords{Positive multifunction; gauge integral; decomposition theorem for multifunction; selection; measure theory.
}
\subjclass[2010]{28B20;  26E25; 26A39; 28B0 ;  46G10; 54C60; 54C65}
 \maketitle


\date{today}

\section{Introduction}

In the last decades, many researchers have investigated properties of measurable and integrable multifunctions and all this has been done, both because it has applications in Control Theory,  Multivalued Image Reconstruction and Mathematical Economics and because this study
is also interesting from the point of view of the  measure  and integration theories, as shown in the articles
\cite{bcs2015,BDpM2,dpmm,ncm,dp-ma,dm2,dpm-ill, Gor,ci1,ci2,balder,ckr1,ckr2,dp,yes,ccgs,ccgs2018,ccgis,bs,bo}.
\\
In particular, we believe that  comparison among  different generalizations of  Lebe\-sgue integral is one of the most fruitful areas of research in the modern theory of integration.

The choice to introduce these weaker types of integrals is motivated moreover by the fact that the well known Kuratowski and Ryll-Nardzewski Theorem requires the separability of the range space $X$,   to guarante the existence of measurable selectors.
Extensions of this theorem for weaker integrals are found for example in the articles \cite{ckr0,ckr1,ckr2,mu15} for the Pettis multivalued integral
with values in non separable Banach spaces and  \cite{dm2,cdpms2016,cdpms2016a,cdpms2016b,cdpms2019,cdpms2019a}, where the existence of  integrable selections in the same sense of the corresponding multifunctions has been considered for some gauge integrals  in the hyperspace  $cwk(X)$  ($ck(X)$) of convex and weakly compact (compact) subsets  of a general Banach space $X$.\\
The connection between Aumann-Pettis integral and Pettis integral is well presented in \cite{eh}. If a multifunction takes as its values closed convex and bounded sets, then it is unknown whether is has a Pettis integrable selection. Consequently, whether it is Aumann-Pettis integrable. If a multifunction is Aumann-Pettis integrable, then it is Pettis integrable in a more general sense (see \cite{eh}). More precisely, instead of integrability of the support functions of the multifunction one requires only integrability of  the negative components of the support functions. Some comments are placed after Proposition \ref{p12}. Moreover, results in this direction could be found in \cite{ckr0,ckr3,mu9,cr1,bs,dp,cdpms2016}.\\

In this work, inspired by \cite{ncm, f1994,f1995,gm,gordon-p,cdpms2016, cdpms2016b,cdpms2016a,m09
},  we
 study the topic of closed convex multifunctions and  we examine two groups of integrals: those functionally determined (we call them ``scalarly defined integrals''), as   Pettis,
Henstock--Kurzweil--Pettis, Denjoy--Pettis integrals,   and those identified by
gauges (we call them ``gauge defined integrals'') as  Henstock, McShane and  Birkhoff integrals.   The last class also includes versions of Henstock and McShane integrals (the $\mcH$ and $\mcM$ integrals, respectively),    when only measurable gauges are allowed, and the variational Henstock integral.\\
In section 3 we study  properties   of scalarly defined  integrals.
  The main results of this section are
 Theorem \ref{p1} and Theorem \ref{T1}.  The first one  is a multivalued version of the well known fact that each non negative  real-valued Henstock-Kurzweil integrable function is Lebesgue integrable.\\
 In section 4 we study properties of gauge integrals. The main results are Theorems \ref{p8}, \ref{c1} and \ref{T11}, where we prove that a 
 multifunction is McShane (resp. Birkhoff) integrable in $cb(X)$ if and only if it is strongly Pettis integrable and Henstock (resp. $\mcH$) integrable. If $c_0\varsubsetneq{X}$, then strong Pettis integrability may be replaced by ordinary Pettis integrability. These results completely describe the relation between Pettis and Henstock integrability and generalize our earlier achievements in this direction, when integrable multifunctions were assumed to take compact \cite{dp} or weakly compact values \cite{cdpms2016b}.\\

\section{Definitions, terminology}
Throughout   $X$ is a Banach space with its dual $X^*$.
The closed unit ball of $X$ is denoted by $B_X$. The symbol
 $c(X)$ denotes the collection of all
nonempty closed convex subsets of $X$ and $cb(X),\, cwk(X)$ and $ck(X)$  denote respectively
the family of all bounded, weakly compact and compact members of $c(X)$.  For every $C \in c(X)$ the
{\it support function of}\;  $C$ is denoted by $s( \cdot, C)$ and
defined on $X^*$ by $s(x^*, C) = \sup \{ \langle x^*,x \rangle \colon  \ x
\in C\}$, for each $x^* \in X^*$. $|C|:=\sup\{\|x\|: x\in{C}\}$  and $d_H$ is the Hausdorff metric on the hyperspace $cb(X)$. $\sigma(X^*,X)$ is the weak$^*$ topology on $X^*$ and $\tau(X^*,X)$ is the Mackey topology on $X^*$. $\mcI$ is the collection of all closed subintervals of the unit interval $[0,1]$.
 The sup norm in the space of bounded real-valued functions is denoted by $\|\cdot\|_{\infty}$.\,  All functions investigated  are defined on the unit interval $[0,1]$
 endowed with Lebesgue measure $\lambda$.
The family
of all Lebesgue measurable subsets of [0, 1] is denoted by
$\mathcal{L}$.\\
A map $\vG\colon [0,1]\to c(X)$ is called a {\it multifunction}.
$\vG$ is {\it simple} if  there exists a finite decomposition $\{A_1,..., A_p\}$ of $[0,1]$ into measurable pairwise disjoint subsets of $[0,1]$  such that $\vG$ is constant on each $A_j$. \\
$\vG:[0,1]\to{ck(X)}$ is {\it  determined by a function} $f:[0,1]\to{X}$ if $\vG(t)={\rm conv}\{0,f(t)\}$ for every $t\in[0,1]$.\\
$\vG\colon [0,1] \to{c(X)}$ is {\it positive} if $s(x^*,\vG)\geq 0$ a.e. for each $x^*\in{X^*}$ separately.\\
$\vG\colon[0,1]\to c(X)$ is said to be {\it scalarly measurable} (resp. {\it scalarly integrable}) if for every $ x{}^* \in  X{}^*$, the map
$s(x{}^*,\vG(\cdot))$ is measurable (resp. integrable).

If a multifunction is a function, then we use the traditional name of strong measurability instead of Bochner measurability.\\
A map $M\colon   \mathcal{L} \to cb(X)$ is  {\it additive}, if $M(A\cup{B})=M(A) \oplus {M(B)}$ for every pair of disjoint elements of $\vS$. An additive map $M\colon \mathcal{L} \to cb(X)$  is called a {\it  multimeasure}  if
$s(x^*,M(\cdot))$ is a finite measure, for every $x^*\in{X^*}$.  If $M$ is a
point map, then we talk about measure. If $M\colon \mathcal{L} \to cb(X)$ is  {\it countably additive} in
the Hausdorff metric (that is, if   $E_n,\, n \in \mathbb{N}$, are pairwise disjoint measurable subsets of $[0,1]$, then $$\lim_nd_H\biggl(\sum_{k=1}^nM_{\vG}(E_k),M_{\vG}\biggl(\bigcup_{k=1}^{\infty}E_k\biggr)\biggr)=0),$$  then it is called an  {\em h}-multimeasure.

It is known that if
 $M\colon \mathcal{L} \to cwk(X)$, then $M$ is a multimeasure if and only if it is an {\em h}-multimeasure (cf. \cite[Chapter 8, Theorem 4.10]{hp}).

We divide multiintegrals into two groups: functionally (or scalarly) defined integrals (Pettis, weakly McShane, Henstock-Kurzweil-Pettis and Denjoy-Pettis) and gauge integrals (Bochner, Birkhoff, McShane, Henstock, $\mcH$ and variationally Henstock).

   We remind that a scalarly integrable multifunction $\vG:[0,1]\to c(X)$ is {\it Dunford integrable} in a non-empty family $\mathcal C\subset{c(X^{**})}$,   if
for every  set $A \in \mcL$ there
exists a set $M_{\vG}^D(A)\in {\mathcal C}$  such that
\begin{equation}\label{Du}
s(x^*,M_{\vG}^D(A))=\int_A s(x^*,\vG)\,d\lambda\,,\;\mbox{for every}\; x^*\in X^*.
\end{equation}
Then $M_{\vG}^D(A)$ is called the {\em Dunford integral} of $\vG$ on $A$.
\\
If $M_{\vG}^D(A)\subset{X}$ for every $A\in\mcL$, then $\vG$ is called {\it Pettis integrable in} $\mathcal C$.
 We write then $M_{\vG}(A)$ instead of $M_{\vG}^D(A)$,
and set $(P)\int_A\vG\,d\mu :=M_{\vG}(A)$. We call $M_{\vG}(A)$ the {\it Pettis} {\it integral of} $\vG$ over
	$A$. It follows from
	the definition  that $M_{\vG}$ is a multimeasure that is $\mu$-continuous.
	We say that a Pettis integrable $\vG\colon \vO\to c(X)$ is {\it strongly Pettis
		integrable}, if $M_{\vG}$ is an $h$-multimeasure. $\mathbb{P}(\mathcal{C})$ denotes multifunctions that are Pettis integrable in $\mathcal{C}$, while  $\mathbb{P}_S(\mathcal{C})$ denotes multifunctions strongly Pettis
 integrable in $\mathcal{C}$.
\\
We recall moreover the definition of the  Denjoy integral in the wide sense
(\cite[Definition 11]{gordon-p}), called also the Denjoy-Khintchine integral,  for a real valued function. Namely, a function $f:[0,1]\to\R$ is {\em Denjoy integrable in the wide sense}, if there exists an ACG function (cf. \cite{Gor})  F such that its approximate derivative is almost everywhere equal to $f$. For simplicity, we call such a function Denjoy integrable and use the symbol $(D)\int f$.

A multifunction $\vG:[0,1]\to{c(X)}$ is said to be {\em  Denjoy-Pettis} (or DP) {\em integrable } in ${\mathcal C}\subset{c(X)}$,   if it is scalarly Denjoy integrable and for each $I\in\mcI$ there exists a set $N_{\vG}(I)\in{\mathcal C}$ such that
\begin{eqnarray}\label{DP}
s(x^*,N_{\vG}(I))=(D)\int_Is(x^*,\vG)\quad \mbox{ for every }  x^*\in{X^*}.
\end{eqnarray}

If in the previous definition,   the multifunction $\Gamma$ is
 scalarly Henstock-Kurzweil (or HK) integrable  we say  that the multifunction $\vG$ is  {\em Henstock-Kurzweil-Pettis} (or HKP) {\em  integrable} in ${\mathcal C}$.
The family of all DP-integrable (resp. HKP-integrable)  multifunctions in $\mathcal C$ is denoted by ${\mathbb {DP}}({\mathcal C})$ (resp. ${\mathbb {HKP}}({\mathcal C}))$.
\\
If an HKP-integrable multifunction $\vG$ is also scalarly integrable, then it is called {\it weakly McShane}  (or $w$MS) {\em integrable}. The family of all $w$MS-integrable functions in $\mathcal C$ is denoted by ${w\mathbb {MS}}({\mathcal C})$.
\\
   Moreover, a  weak$^*$ scalarly integrable multifunction $\vG:[0,1]\to c(X^*)$ is {\it Gelfand integrable} in $\mathcal C\subset{c(X^*)}$,  if for each  set $A \in \mcL$ there
exists a set $M_{\vG}^G(A)\in {\mathcal C}$  such that
\begin{equation}\label{Ge}
s(x,M_{\vG}^G(A))=\int_A s(x,\vG)\,d\lambda\,,\;\mbox{for every}\; x\in X.
\end{equation}
$M_{\vG}^G(A)$ is called the {\em Gelfand integral} of $\vG$ on $A$. \\

For the gauge integrals we need some preliminary definitions and
to avoid misunderstanding let us
 point out that gauge integrable multifunctions take always bounded values ($d_H$ is well defined on bounded sets only) whereas scalarly defined integrals integrate multifunctions with arbitrary closed convex values. \\
 A {\it partition} ${\mathcal P}$ {\it in} $[0,1]$ is a collection $\{(I{}_1,t{}_1),$ $ \dots,(I{}_p,t{}_p) \}$,
 where $I{}_1,\dots,I{}_p$ are nonoverlapping subintervals of $[0,1]$, $t{}_i$ is a point of $[0,1]$, $i=1,\dots, p$.
 If $\cup^p_{i=1} I{}_i=[0,1]$, then  ${\mathcal P}$ is {\it a partition of } $[0,1]$. If   $t_i$ is a point of $I{}_i$, $i=1,\dots,p$,  we say that ${\mathcal P}$ is a {\it Perron partition of } $[0,1]$.\\
 A {\it gauge} on $[0,1]$ is a positive function on $[0,1]$. For a given gauge $\delta$ on $[0,1]$,
 we say that a partition $\{(I{}_1,t{}_1), \dots,(I{}_p,t{}_p)\}$ is $\delta$-{\it fine} if
 $I{}_i\subset(t{}_i-\delta(t{}_i),t{}_i+\delta(t{}_i))$, $i=1,\dots,p$.\\
A multifunction $\vG:[0,1]\to cb(X)$ is said to be {\it Henstock} (resp. {\it McShane})
  {\it  integrable} on $[0,1]$,  if there exists   $\vPh{}_{\vG}([0,1]) \in cb(X)$
    with the property that for every $\varepsilon > 0$ there exists a gauge $\delta$ on $[0,1]$
such that for each
$\delta$-fine Perron partition (resp. partition)
we have
\begin{eqnarray}\label{H-MS}
d{}_H \biggl(\vPh{}_{\vG}([0,1]),\sum_{i=1}^p\vG(t{}_i)|I{}_i|\biggr)<\ve\,.
\end{eqnarray}
$\vG$ is said to be {\it Henstock} (resp. {McShane})
  {\it  integrable} on  $I \in \mathcal{I}$ (resp. $E\in
 \mathcal{L}$) if
$\vG 1_I$
(resp. $\vG 1{}_E$) is  integrable on $[0,1]$
in the corresponding sense. Moreover, if the gauge $\delta$ of the Henstock integrability is measurable we speak on $\mathcal{H}$-integrability (see also \cite{ncm}).\\
A multifunction $\vG:[0,1]\to cb(X)$ is said to be {\it Birkhoff}
   integrable on $[0,1]$,
   if it is McShane integrable but the gauges are measurable functions.
As before,
 we denote by $\mathbb{H}(cb(X)) \, (resp.    \mathcal{H}(cb(X)), \, \mathbb{MS}(cb(X)),\, \mathbb{BI}(cb(X))$), the spa\-ces of Henstock, (resp. Henstock with measurable gauges,  McShane, Birkoff) integrable multifunctions in $cb(X)$.
\\
   A multifunction $\vG:[0,1]\to cwk(X)$ is said to be {\it variationally Henstock} ({\it
resp. McShane})
   integrable,
   if there exists
   a multimeasure  $\vPh_{\vG}: {\mathcal I} \to {cb(X)}$
   (resp. $\vPh_{\vG}: {\mathcal{L}} \to {cb(X)}$) with the following property:
   for every $\ve>0$ there exists a gauge $\delta$
   on $[0,1]$ such that for each $\delta$-fine Perron partition (resp.  partition)
   $\{(I_1,t_1), \dots,(I_p,t_p)\}$
we have
\begin{eqnarray}\label{aa}
\sum_{j=1}^pd_H \left(\vPh_{\vG}(I_j),\vG(t_j))|I_j|\right)<\ve\,.
\end{eqnarray}
 The set  multifunction  $\vPh_{\vG}$  will be  called the {\it variational Henstock} ({\it McShane}) {\it primitive} of $\vG$.
\\
Finally  ${\mathcal{S}}_H(\vG)\;(resp.\, {\mathcal{S}}_{MS}(\vG)\,
   \mathcal{S}_P(\vG),\,  {\mathcal{S}}_{HKP}(\vG)\,,{\mathcal{S}}_B(\vG)\,,\,  {\mathcal{S}}_{vH}(\vG)\,,\;...)$ denotes the family of all scalarly measurable selections of $\vG$
    that are Henstock (resp. McShane, Pettis, Henstock-Kurzweil-Pettis, Birkhoff, variationally Henstock, ...) integrable.\\

A useful tool to study the $cb(X)$-valued multifunctions
 is the R{\aa}dstr\"{o}m embedding (see, for example, \cite[Theorem 3.2.9 and Theorem 3.2.4(1)]{Beer}
or \cite[Theorem II-19]{CV})
 $i:cb(X)\to l_{\infty}(B_{X^*})$
 given by $i(A):=s(\cdot, A)$. It  satisfies the following properties:
\begin{description}
\item[1)] $i(\alpha A \,  \oplus \,  \beta C) = \alpha i(A) + \beta i(C)$ for every $A,C\in  cb(X),\,\, \alpha, \beta \in  \mathbb{R}{}^+$;  (here the symbol $\oplus$ is the Minkowski addition)
\item[2)] $d_H(A,C)=\|i(A)-i(C)\|_{\infty},\quad A,C\in  cb(X)$;
\item[3)] $i(cb(X))$  is a closed cone in the space
$l_{\infty}(B_{X^*})$ equipped with the norm of the uniform convergence.
\end{description}
\section{Scalarly defined integrals.}
 The following result is a generalization of \cite [Theorem 6.7]{mu8}.
\begin{thm}\label{70}
If $\vG:[0,1]\to c(X^*)$ is weak$^*$ scalarly integrable, then
$\vG$ is Gelfand integrable in $cw^*k(X^*)$.
\end{thm}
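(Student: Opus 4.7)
The plan is to construct, for each $A \in \mcL$, a weak$^*$ compact convex subset $M_{\vG}^G(A)$ of $X^*$ whose support function equals
\[
p_A(x) := \int_A s(x,\vG(t))\,d\lambda(t),\qquad x \in X.
\]
By weak$^*$ scalar integrability $p_A$ is real-valued on $X$, and sublinearity of $s(\cdot,\vG(t))$ for every $t$ together with linearity of the integral shows that $p_A$ is sublinear. The crux is to verify that $p_A$ is norm-continuous on $X$; once this is done, the standard duality between continuous sublinear functionals on $X$ and weak$^*$-compact convex subsets of $X^*$ supplies
\[
M_{\vG}^G(A) := \{x^* \in X^* : \langle x^*, x\rangle \leq p_A(x) \text{ for every } x \in X\},
\]
which is weak$^*$-closed convex, bounded (since $p_A$ is continuous), hence belongs to $cw^*k(X^*)$ by Banach-Alaoglu; its support function is then precisely $p_A$, yielding identity \eqref{Ge}.

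To establish continuity of $p_A$, I would work with the symmetrized functional
\[
q_A(x) := p_A(x) + p_A(-x).
\]
This $q_A$ is sublinear, satisfies $q_A(-x)=q_A(x)$, and is nonnegative because $p_A(x)+p_A(-x)\geq p_A(0)=0$. Pointwise in $t$,
\[
s(x,\vG(t)) + s(-x,\vG(t)) \;=\; \sup_{y^*,z^* \in \vG(t)} \langle y^* - z^*, x\rangle,
\]
a supremum of norm-continuous linear functionals of $x$, which is therefore norm-lower-semicontinuous in $x$ and nonnegative. Applying Fatou's lemma to these nonnegative integrands along any sequence $x_n \to x$ in $X$ gives
\[
q_A(x) \;\leq\; \int_A \liminf_n\bigl[s(x_n,\vG(t)) + s(-x_n,\vG(t))\bigr]\,d\lambda(t) \;\leq\; \liminf_n q_A(x_n),
\]
so $q_A$ is norm-lower-semicontinuous.

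Now $q_A$ is a finite-valued, lower semicontinuous, convex (in fact sublinear and symmetric) functional on the Banach space $X$. Writing $X = \bigcup_n \{q_A \leq n\}$ as a countable union of closed, convex, symmetric sets, the Baire category theorem yields an index $n_0$ for which $\{q_A \leq n_0\}$ has non-empty interior; convexity and symmetry then force this set to contain a ball $rB_X$ about the origin, and positive homogeneity upgrades the bound to $q_A(x) \leq (n_0/r)\|x\|$ on all of $X$. Combined with $-p_A(-x) \leq p_A(x) \leq q_A(x)$, this gives $|p_A(x)| \leq (n_0/r)\|x\|$, so the sublinear $p_A$ is continuous, which is what was needed.

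I expect the main obstacle to be the Fatou step. A direct application of Fatou to $p_A$ itself fails, since a uniform integrable lower bound for $s(x_n,\vG(t))$ is unavailable when $\vG(t)$ is unbounded and scalar integrability controls only one direction at a time. The symmetrization trick replaces the signed integrand by the nonnegative $s(x,\vG)+s(-x,\vG)$, for which Fatou requires no dominating hypothesis, and this is precisely what allows the generalization from $cw^*k(X^*)$-valued multifunctions to $c(X^*)$-valued ones.
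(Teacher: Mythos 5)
Your overall architecture---prove norm-continuity of the sublinear functional $p_A$ and then produce $M^G_{\vG}(A)$ by duality and Banach--Alaoglu---is sound, and it is how the paper itself argues in its first step; the paper, however, only verifies continuity directly in the weak$^*$ scalarly \emph{bounded} case and then disposes of the general case by decomposing $\vG$ into a series of weak$^*$ scalarly bounded multifunctions, citing \cite{mu8}. Your execution has a genuine gap at the final step: the chain $-p_A(-x)\le p_A(x)\le q_A(x)$ is false, since $p_A(x)\le q_A(x)=p_A(x)+p_A(-x)$ is equivalent to $p_A(-x)\ge 0$, which holds only for positive multifunctions. The symmetrization destroys exactly the information you need: if $\vG(t)=\{f(t)\}$ is single-valued with $f$ weak$^*$ scalarly integrable, then $q_A\equiv 0$, so your Fatou--Baire bound on $q_A$ says nothing about $p_A$, whose continuity in that case is precisely the classical (and not free) Gelfand lemma, usually obtained from the closed graph theorem. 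Hence a bound on $q_A$ cannot yield $|p_A(x)|\le C\|x\|$, and the proof as written does not even recover the single-valued case.

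The idea can be repaired in its own spirit: apply Fatou and Baire not to $q_A$ but to $r_A(x):=\int_A \max\{s(x,\vG(t)),0\}\,d\lambda(t)$. The integrand is nonnegative, measurable, and lower semicontinuous in $x$, so Fatou gives lower semicontinuity of $r_A$; moreover $r_A$ is sublinear, finite (because $0\le s(x,\vG)^+\le |s(x,\vG)|\in L^1$), and genuinely dominates $p_A$. Writing $X=\bigcup_n\{r_A\le n\}$, Baire gives a set $\{r_A\le n_0\}$ containing a ball $B(x_0,r)$, and subadditivity ($r_A(u)\le r_A(x_0+u)+r_A(-x_0)$) transfers the bound to a ball centred at $0$, so $r_A(x)\le C\|x\|$; then $-C\|x\|\le -p_A(-x)\le p_A(x)\le r_A(x)\le C\|x\|$ and $p_A$ is continuous, after which your duality step goes through. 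With this correction your argument would be a self-contained alternative to the paper's route, which instead rests on the boundedness reduction and the decomposition result of \cite{mu8}.
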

\begin{proof}
 Assume at the beginning that $\vG$ is weak$^*$ scalarly bounded (i.e.  there exists $0<K<\infty$  such that $|s(x,\vG)|\leq{K}\|x\|$ a.e. for each $x\in{X}$ separately).
Let us fix $A\in\mcL$ and define a  sublinear functional on $X$ setting
$\varphi_A(x):\,=\int_As(x,\vG)\,d\lambda.$
One can easily see that $\varphi_A$ is norm continuous.
This proves  the existence of a set $C_A\in{cw^*k(X^*)}$ such that $\varphi_A(x)=s(x,C_A)$, for every $x\in{X}$  (we simply take as $C_A$ the weak$^*$-closure of the set $\{x^*\in{X^*}\colon \langle{x^*,x}\rangle\leq \varphi_A(x)\}$).
 Consequently, $\vG$ is Gelfand integrable in $cw^*k(X^*)$.
The general case follows by decomposition of $\vG$ in a series of weak$^*$ scalarly bounded multifunctions (see \cite [Theorem 6.7]{mu8}).
\end{proof}
As a direct consequence of Theorem \ref{70} we obtain the following generalization of \cite[Theorem 6.9]{mu8}  to the case of $c(X)$ valued multifunctions:
\begin{thm}\label{t14}
Each scalarly integrable multifunction $\vG:[0,1]\to{c(X)}$ is Dunford integrable in $cw^*k(X^{**})$.
\end{thm}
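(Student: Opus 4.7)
The plan is to reduce the statement directly to Theorem \ref{70} by reinterpreting $\vG$ through the canonical isometric embedding $X \hookrightarrow X^{**}$.

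First, I would observe that since $X$ is a norm-closed subspace of $X^{**}$, every $C \in c(X)$ is also a norm-closed convex subset of $X^{**}$; in other words, $c(X) \subseteq c(X^{**})$. Thus the given $\vG:[0,1]\to c(X)$ may be legitimately regarded as a multifunction $\wt{\vG}:[0,1]\to c(X^{**})$ without changing its values. For any $C\in c(X)$ and any $x^*\in X^*$, the two possible interpretations of the support function agree, namely
\begin{equation*}
\sup\{\langle x^*,x\rangle:x\in C\}=\sup\{\langle x^{**},x^*\rangle:x^{**}\in C\},
\end{equation*}
because the second supremum is attained on the subset $C\subseteq X$ of $X^{**}$.

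Next, I would apply Theorem \ref{70} with $X^*$ playing the role of the predual (so that $X^{**}=(X^*)^*$ takes the role of ``$X^*$'' in the statement of that theorem). The hypothesis ``weak$^*$ scalarly integrable'' required by Theorem \ref{70} for $\wt{\vG}:[0,1]\to c(X^{**})$ reads: for each $x^*\in X^*$, the function $s(x^*,\wt{\vG}(\cdot))$ is integrable. By the identification above this is precisely the scalar integrability of $\vG$, which we have assumed. Consequently Theorem \ref{70} yields the existence, for every $A\in\mcL$, of a set $M_{\vG}^G(A)\in cw^*k(X^{**})$ satisfying
\begin{equation*}
s(x^*,M_{\vG}^G(A))=\int_A s(x^*,\wt{\vG})\,d\lambda=\int_A s(x^*,\vG)\,d\lambda,\qquad x^*\in X^*.
\end{equation*}

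Setting $M_{\vG}^D(A):=M_{\vG}^G(A)$, this is exactly the defining condition \eqref{Du} for $\vG$ to be Dunford integrable in $cw^*k(X^{**})$. No real obstacle arises beyond careful bookkeeping of the embedding $X\subseteq X^{**}$ and of the fact that $X^*$ is simultaneously the space testing scalar integrability of $\vG$ and the predual to which Theorem \ref{70} is applied.
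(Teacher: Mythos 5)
Your proposal is correct and follows the paper's own route: the paper derives Theorem \ref{t14} as a direct consequence of Theorem \ref{70}, precisely by regarding $\vG$ as a $c(X^{**})$-valued multifunction via the canonical embedding $X\hookrightarrow X^{**}$ and noting that weak$^*$ scalar integrability with respect to the predual $X^*$ coincides with scalar integrability of $\vG$. Your write-up merely makes this identification of support functions explicit, which is exactly the intended argument.
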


  In \cite[Proposition 23]{yes} an example of a {$w$}MS-integrable function
   is given which is not Pettis integrable. The same property has the function constructed in \cite{gm}.
 In case of positive multifunctions  the situation is different.\\

The next result has been proven in \cite[Lemma 1 and Remark  3]{dm2} for the Denjoy--Pettis integral and multifunctions with weakly compact values. Unfortunately that proof fails in the general case.
\begin{thm}\label{p1}
If $\vG \in{\mathbb {DP}}(cb(X))$ (resp. ${\mathbb {DP}}(cwk(X))$, \;${\mathbb {DP}}(ck(X))$)  is a positive multifunction, then
 $\vG\in{\mathbb {P}}(cb(X))$ (resp. ${\mathbb {P}}(cwk(X))$, \;${\mathbb {P}}(ck(X))$).
 \end{thm}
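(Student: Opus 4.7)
The plan is to first reduce scalar Denjoy integrability to Lebesgue integrability using positivity, then apply Theorem~\ref{t14} to obtain candidate integrals $M_{\vG}^{D}(A)\in cw^*k(X^{**})$, and finally verify that these candidates lie in $\mathcal{C}\in\{cb(X),cwk(X),ck(X)\}$ via the R\aa dstr\"om embedding. For each fixed $x^*\in X^*$, $s(x^*,\vG)\ge 0$ a.e.\ is Denjoy--Khintchine integrable; by the classical fact that a non-negative Denjoy--Khintchine integrable function is Lebesgue integrable (with equal integral), $\vG$ is scalarly Lebesgue integrable and, for every $I\in\mcI$,
\[
s(x^*,N_{\vG}(I))\;=\;(D)\!\int_I s(x^*,\vG)\;=\;\int_I s(x^*,\vG)\,d\lambda.
\]
Theorem~\ref{t14} then yields a Dunford integral $M_{\vG}^{D}(A)\in cw^*k(X^{**})$ for every $A\in\mcL$ with $s(x^*,M_{\vG}^{D}(A))=\int_A s(x^*,\vG)\,d\lambda$. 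On intervals, $M_{\vG}^{D}(I)$ and $N_{\vG}(I)$ have matching support functions on $X^*$; in the $cwk(X)$ and $ck(X)$ cases $N_{\vG}(I)$ is weakly compact in $X$, hence $\sigma(X^{**},X^*)$-closed in $X^{**}$, so Hahn--Banach forces $M_{\vG}^{D}(I)=N_{\vG}(I)\in\mathcal{C}$; in the $cb(X)$ case $N_{\vG}(I)\in cb(X)$ directly witnesses Pettis integrability on the interval.

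To propagate from intervals to all measurable sets, I would use the R\aa dstr\"om embedding $i:cb(X)\to l_\infty(B_{X^*})$; by property~(3), $i(\mathcal{C})$ is a closed cone in $l_\infty(B_{X^*})$. Set $\mu(A):=s(\cdot,M_{\vG}^{D}(A))$. By positivity, $\mu$ is a positive additive set function with $\|\mu(A)\|_\infty\le|N_{\vG}([0,1])|$, and $\mu(B)\in i(\mathcal{C})$ for every $B$ in the algebra $\mathcal{A}$ of finite unions of non-overlapping intervals (using $i(N_{\vG}(I_1)\oplus N_{\vG}(I_2))=i(N_{\vG}(I_1))+i(N_{\vG}(I_2))$ from property~(1)). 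For any $A\in\mcL$, choose $B_n\in\mathcal{A}$ with $\lambda(A\triangle B_n)\to 0$; dominated convergence gives $\mu(B_n)(x^*)\to\mu(A)(x^*)$ pointwise in $x^*\in B_{X^*}$. If this pointwise convergence can be upgraded to $\|\cdot\|_\infty$-convergence, then closedness of $i(\mathcal{C})$ forces $\mu(A)\in i(\mathcal{C})$, hence $M_{\vG}^{D}(A)\in\mathcal{C}$, completing the proof that $\vG\in\mathbb{P}(\mathcal{C})$.

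The main obstacle is precisely this uniform convergence
\[
\sup_{x^*\in B_{X^*}}\int_{A\triangle B_n}s(x^*,\vG)\,d\lambda\;\longrightarrow\;0,
\]
which is equivalent to uniform absolute continuity (with respect to $\lambda$) of the positive measure family $\{\nu_{x^*}\}_{x^*\in B_{X^*}}$, $\nu_{x^*}(B):=\int_B s(x^*,\vG)\,d\lambda$. For the $cwk(X)$ and $ck(X)$ cases the weak compactness of the values of the multimeasure $N_{\vG}$ provides relative weak compactness of this family in $L^1(\lambda)$ and hence uniform integrability (as in \cite{dm2}, Lemma~1). For the $cb(X)$ case---the new contribution---I would combine positivity with the uniform bound $\nu_{x^*}([0,1])\le|N_{\vG}([0,1])|$ and a Vitali--Hahn--Saks type argument applied to the $l_\infty(B_{X^*})$-valued set-function $\mu$ on $\mathcal{A}$, in order to deduce uniform countable additivity and hence the required uniform absolute continuity.
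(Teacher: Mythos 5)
Your opening steps (positivity plus Gordon's theorem to upgrade scalar Denjoy to Lebesgue integrability, then Theorem \ref{t14} to get the Dunford sets $M_{\vG}^{D}(A)\in cw^*k(X^{**})$, then identification of $M_{\vG}^{D}(I)$ with $N_{\vG}(I)$ on intervals) coincide with the paper's. The divergence, and the genuine gap, is in the passage from $\mcI$ to all of $\mcL$ in the $cb(X)$ case, which is precisely the new content of Theorem \ref{p1}. Your scheme needs $\|\mu(B_n)-\mu(A)\|_\infty\to 0$, i.e.
\[
\sup_{x^*\in B_{X^*}}\int_{A\triangle B_n}s(x^*,\vG)\,d\lambda\longrightarrow 0,
\]
that is, uniform integrability of the family $\{s(x^*,\vG)\colon x^*\in B_{X^*}\}$. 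The tools you invoke do not yield it: the uniform bound $\nu_{x^*}([0,1])\le|N_{\vG}([0,1])|$ gives only boundedness, and a pointwise bounded family of non-negative $\lambda$-continuous measures need not be uniformly countably additive (take densities $n1_{[0,1/n]}$); Vitali--Hahn--Saks and Nikod\'ym-type theorems require a \emph{setwise convergent sequence}, not a bounded family indexed by $B_{X^*}$. Worse, uniform absolute continuity of $\{\nu_{x^*}\}_{x^*\in B_{X^*}}$ is essentially equivalent to the primitive being an $h$-multimeasure, i.e.\ to \emph{strong} Pettis integrability of $\vG$; the paper explicitly leaves open whether positive Pettis integrable multifunctions in $cb(X)$ are strongly Pettis integrable, so your route would have to settle that open problem as a by-product. (For $cwk(X)$ and $ck(X)$ your appeal to weak compactness reproduces the argument of \cite{dm2}, which is fine but not the new case.)

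The idea you are missing is that no uniformity is needed at all: the paper defines the candidate integral directly as the trace $N_{\vG}(E):=X\cap M_{\vG}^{D}(E)$ for every $E\in\mcL$ (nonempty, closed, convex, and contained in $N_{\vG}([0,1])$ because positivity gives $0\in M_{\vG}^{D}(E)\subset M_{\vG}^{D}([0,1])$), and then argues scalarly for each fixed $x^*$: one has $s(x^*,N_{\vG}(E))\le\int_E s(x^*,\vG)\,d\lambda$, the left-hand side is an additive set function which, being squeezed between $0$ and a $\lambda$-continuous measure, is countably additive, and since the two countably additive set functions agree on $\mcI$ they agree on $\mcL$, turning the inequality into the required equality. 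This replaces your norm-approximation in $l_\infty(B_{X^*})$ by a one-functional-at-a-time measure-theoretic argument and is why the paper's proof covers $cb(X)$ without any uniform integrability assumption.
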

\begin{proof} Assume that $\vG\in{\mathbb {DP}}(cb(X))$.
Since $s(x^*,\vG)$ is a.e. non-negative and Denjoy integrable,
 it is  Lebesgue integrable (cf. \cite[Theorem 7.7]{Gor}). By the assumption, for every $I\in\mcI$ there exists $N_{\vG}(I)\in{cb(X)}$ such that
$$
s(x^*,N_{\vG}(I))=\int_Is(x^*,\vG)\,d\lambda\quad\mbox{for every}\;x^*\in{X^*}.
$$
In virtue of Theorem \ref{t14} $\vG$ is Dunford integrable in $cw^*k(X^{**})$:
$$
\forall\;E\in\mcL\;\exists\;M_{\vG}^D(E)\in{cw^*k(X^{**})}\;  \forall\;x^*\in{X^*}\;s(x^*,M_{\vG}^D(E))=\int_Es(x^*,\vG)\,d\lambda.
$$
Thus, for every $I\in\mcI$ we have the equality
$
s(x^*,N_{\vG}(I))=s(x^*,M_{\vG}^D(I)).
$
Due to the Hahn-Banach theorem, it follows $M_{\vG}^D(I)=\ov{N_{\vG}(I)}^*$ and $X\cap{M_{\vG}^D(I)}=N_{\vG}(I)$, for every $I\in\mcI$.
We are going to prove that $\vG$ is Pettis integrable. So let us fix  $E\in\mcL$. Since the support functionals are a.e. non-negative, we have 
$M_{\vG}^D(E)\subset M_{\vG}^D[0,1]$ and then  ${X}\cap M_{\vG}^D(E)\subset {X}\cap M_{\vG}^D[0,1]=N_{\vG}[0,1]$. 
The set $N_{\vG}(E):={X}\cap M_{\vG}^D(E)$ is closed and 
$$
s(x^*,N_{\vG}(E))=s(x^*,\ov{{X}\cap M_{\vG}^D(E)}^*)\leq s(x^*,M_{\vG}^D(E))\,.
$$ 
Consequently, we have
\begin{eqnarray}\label{e20}
s(x^*,N_{\vG}(E))\leq\int_Es(x^*,\vG)\,d\lambda\quad\mbox{for all}\;x^*\in{X^*}.
\end{eqnarray}
But $s(x^*,N_{\vG}):\mcL\to\R$ is an additive set function that is, due to the inequality (\ref{e20}) countably additive. Since  both sides of (\ref{e20}) coincide on $\mcI$, they coincide on $\mcL$ and (\ref{e20}) becomes equality. In this way we obtain the required Pettis integrability of $\vG$ in $cb(X)$.
\end{proof}
 A useful application of   above property for   positive multifunctions is the  decomposition of  a multifunction ${\vG}$ integrable in ``a certain sense''  into a sum of one of  its selections  integrable in the same way  and a  positive  multifunction ``integrable in a stronger sense'' than   ${\vG}$ is.
An important  key ingredient  in such a decomposition  is the  existence of  selections  ``integrable in the same sense'' as the corresponding multifunction.
  The existence of scalarly measurable selections of arbitrary weakly compact valued scalarly measurable multifunctions has been proven by Cascales, Kadets and Rodriguez in \cite{ckr1}.\\

Concerning the integrability of selections for functionally defined  multifunctions with weakly compact values   the following holds:
\begin{prop}\label{T2}
{\rm (see \cite{dm2})}
	If the multifunction  $\vG:[0,1] \to{cwk(X)}$  is DP (resp. HKP, Pettis or weakly McShane)
	integrable in $cwk(X)$, and $f$ is a scalarly measurable selection of $\vG$, then
	$f$ is respectively DP  (resp. HKP, Pettis or weakly McShane)
	integrable.
\end{prop}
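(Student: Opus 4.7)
The plan is to handle all four cases uniformly through two main steps: a sandwich argument producing scalar integrability of $x^*f$, followed by a Mackey continuity argument producing the vector integral inside $N_{\vG}(I)$ (respectively $M_{\vG}(A)$).

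Since $f(t)\in\vG(t)$ for every $t$, the pointwise inequalities
\[
-s(-x^*,\vG(t))\leq \langle x^*,f(t)\rangle\leq s(x^*,\vG(t))
\]
hold for every $x^*\in X^*$. In the DP case this means that $s(x^*,\vG)-x^*f$ is a non-negative measurable function bounded above by $s(x^*,\vG)+s(-x^*,\vG)$, which is itself a non-negative Denjoy integrable function (the sum of two DP-scalar integrals of $\vG$). By the standard fact that a non-negative Denjoy integrable function is automatically Lebesgue integrable (used already in the proof of Theorem \ref{p1}), this difference is Lebesgue integrable, so $x^*f$ is Denjoy integrable as a difference of a Denjoy and a Lebesgue integrable function. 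The same sandwich handles HKP via the analogous fact for HK integration, while for the Pettis and $w$MS cases it is immediate because $s(\pm x^*,\vG)$ are already Lebesgue integrable.

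For the vector-valued step, given $I\in\mcI$ (or $A\in\mcL$ in the Pettis and $w$MS cases), I would define $T_I\colon X^*\to\R$ by $T_I(x^*)=\int_I x^*f$ in the appropriate scalar sense. Integrating the sandwich yields
\[
-s(-x^*,N_{\vG}(I))\leq T_I(x^*)\leq s(x^*,N_{\vG}(I)).
\]
Because $N_{\vG}(I)\in cwk(X)$, its support function is continuous on $(X^*,\tau(X^*,X))$: the polar of a weakly compact convex set is a Mackey neighbourhood of the origin. Hence $T_I$ is a Mackey continuous linear functional on $X^*$, and by the Mackey--Arens theorem there is a unique $y_I\in X$ with $\langle x^*,y_I\rangle=T_I(x^*)$ for every $x^*$. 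Applying the Hahn--Banach characterisation to the closed convex set $N_{\vG}(I)$, the inequality $\langle x^*,y_I\rangle\leq s(x^*,N_{\vG}(I))$ puts $y_I$ inside $N_{\vG}(I)$, and $y_I$ is the required DP (resp.\ HKP) integral of $f$ on $I$.

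The main foreseeable obstacle is the DP case, since Denjoy integration is not closed under monotone domination in general; the whole argument rests on the passage from non-negative Denjoy integrability to Lebesgue integrability, which is the same trick already exploited in Theorem \ref{p1}. Once that point is secured, the Pettis and $w$MS conclusions follow by applying the construction to each $A\in\mcL$, with countable additivity of $A\mapsto y_A$ inherited from countable additivity of $M_{\vG}$ through the uniqueness of the Mackey representation, and the $w$MS conclusion combining the HKP vector integral with the Lebesgue scalar integrability already established.
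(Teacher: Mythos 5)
Your proposal is correct, but note that the paper itself offers no proof of Proposition \ref{T2}: it is imported verbatim from \cite{dm2}, so there is no in-paper argument to compare with. What you wrote is essentially the standard selection argument of that reference. The two key points are exactly the right ones: (a) the sandwich $-s(-x^*,\vG)\le x^*f\le s(x^*,\vG)$ together with the fact that the non-negative Denjoy (resp.\ HK) integrable function $s(x^*,\vG)+s(-x^*,\vG)$ is Lebesgue integrable, which dominates the non-negative measurable function $s(x^*,\vG)-x^*f$ and hence gives scalar Denjoy (resp.\ HK, Lebesgue) integrability of $x^*f$; and (b) the use of weak compactness of $N_{\vG}(I)$ (this is where the hypothesis ``integrable in $cwk(X)$'' is essential and where the argument breaks down for $cb(X)$-valued multifunctions) to represent the sandwiched linear functional $T_I$ by an element of $X$, either via Mackey--Arens as you do or, equivalently, by viewing the Dunford-type integral in $X^{**}$ and using that weakly compact convex sets are weak$^*$-closed there. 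Two small points worth making explicit: the claim that the polar of a weakly compact convex set is a Mackey neighbourhood of the origin uses the Krein--\v{S}mulian theorem (pass to the closed absolutely convex hull), and the final remark about countable additivity of $A\mapsto y_A$ is not needed, since Pettis integrability of $f$ only requires the representing vectors $y_A\in X$ for every $A\in\mcL$.
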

In the more general case of $cb(X)$-valued multifunctions we do not know if   each   scalarly measurable multifunction possesses scalarly measurable selections.\\

Decomposition theorems in case of weakly compact valued multifunctions   have been proven in \cite[Theorem 1 and Remark  3]{dm2} and in \cite[Theorem 3.2]{cdpms2016b}. Below, we formulate the results in a more general situation.
\begin{thm}\label{T1}
If $\vG:[0,1]\to {c(X)}$ is  a 
 multifunction, then the following conditions are equivalent:
\begin{enumerate}
	\item[{\upshape \ref{T1}.i)}]
	$\vG$ is $DP$-integrable  in $cb(X)$ and ${\mathcal S}_{DP}(\vG)\neq\emp$;
	\item[{\upshape \ref{T1}.ii)}]
	${\mathcal S}_{DP}(\vG)\neq\emp$ and for all $f\in {\mathcal S}_{DP}(\vG)$  the multifunction $G:[0,1]\to cb(X)$ defined by  $G=\vG-f$ is  Pettis integrable in $cb(X)$;
	\item[{\upshape \ref{T1}.iii)}]
	There exists $f\in{\mathcal S}_{DP}(\vG)$  such that the multifunction $G=\vG-f$ is  Pettis integrable in $cb(X)$;
\end{enumerate}
$DP$-integrability above may be replaced by $HKP$ or $wMS$-integrability.
\end{thm}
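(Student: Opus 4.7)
The implication (ii)$\Rightarrow$(iii) is immediate (pick any element out of the nonempty $\mathcal{S}_{DP}(\vG)$). The two remaining implications both rest on the identity $\vG(t)=G(t)\oplus\{f(t)\}$ together with the translation formula $s(x^*,A+v)=s(x^*,A)+\langle x^*,v\rangle$ for $A\in c(X)$ and $v\in X$, which turns the multivalued shift $G=\vG-f$ into an additive scalar shift both at the level of support functions and at the level of the corresponding primitive sets. The conceptually decisive ingredient for the hard direction (i)$\Rightarrow$(ii) is Theorem \ref{p1}: subtracting a selection $f$ from $\vG$ produces a multifunction whose support functions are pointwise non-negative, and Theorem \ref{p1} exactly upgrades such positive DP-integrability to Pettis integrability.

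\textbf{The two nontrivial implications.} For (iii)$\Rightarrow$(i) I would take the given $f\in\mathcal{S}_{DP}(\vG)$ with $G=\vG-f\in\mathbb{P}(cb(X))$ and use $s(x^*,\vG)=s(x^*,G)+\langle x^*,f\rangle$ to express $s(x^*,\vG)$ as the sum of a Lebesgue integrable and a Denjoy integrable function, hence as a Denjoy integrable function; for $I\in\mcI$ the set $N_\vG(I):=M_G(I)+(D)\int_I f$ is a translate of the member $M_G(I)\in cb(X)$ by a single vector of $X$, so it belongs to $cb(X)$ and has support function $(D)\int_I s(x^*,\vG)\,d\lambda$, giving $\vG\in\mathbb{DP}(cb(X))$. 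For (i)$\Rightarrow$(ii) I would fix an arbitrary $f\in\mathcal{S}_{DP}(\vG)$, set $G(t):=\vG(t)-f(t)\in c(X)$, and run the symmetric computation: the scalar function $s(x^*,G)=s(x^*,\vG)-\langle x^*,f\rangle$ is Denjoy integrable, and $N_G(I):=N_\vG(I)-(D)\int_I f\in cb(X)$ realises the integral, so $G\in\mathbb{DP}(cb(X))$. Since $f(t)\in\vG(t)$ forces $\langle x^*,f(t)\rangle\le s(x^*,\vG(t))$ for every $t$ and every $x^*$, $G$ is positive, and Theorem \ref{p1} yields $G\in\mathbb{P}(cb(X))$, which is precisely (ii).

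\textbf{HKP and $w$MS variants, and the main obstacle.} The same bookkeeping applies verbatim for HKP and $w$MS: one has $w\mathbb{MS}(cb(X))\subset\mathbb{HKP}(cb(X))\subset\mathbb{DP}(cb(X))$, sums and pointwise differences of HK (resp.\ Lebesgue) integrable functions stay in the same class, and Theorem \ref{p1} remains available because a non-negative HK integrable function is Lebesgue integrable. The $w$MS case requires only the additional observation that $s(x^*,G)+\langle x^*,f\rangle$ is a sum of Lebesgue integrable functions, preserving scalar integrability of $\vG$. The single genuine obstacle in the entire argument is the positivity-to-Pettis upgrade used in (i)$\Rightarrow$(ii); this is handled by the already established Theorem \ref{p1}, and everything else reduces to straightforward manipulation of support functions and translations by the point integrals $(D)\int_I f\in X$.
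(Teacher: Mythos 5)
Your proposal is correct and follows essentially the same route as the paper: the paper's proof of Theorem \ref{T1} consists precisely of applying Theorem \ref{p1} to the positive multifunction $G=\vG-f$ for (i)$\Rightarrow$(ii) and declaring the remaining implications clear. You simply make those ``clear'' steps explicit via the translation identity $s(x^*,\vG)=s(x^*,G)+\langle x^*,f\rangle$ and the shifted primitives $N_\vG(I)=M_G(I)+(D)\int_I f$, including the HKP and $w$MS variants, which matches the intended argument.
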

\begin{proof} ${\rm  \ref{T1}.i)}\Rightarrow {\rm  \ref{T1}.ii)}$ follows  by Theorem  \ref{p1} to $G:=\vG-f$. The other implications are clear.
\end{proof}
\begin{rem}\label{r1}{\rm Exactly in the same manner one proves the analogous decomposition theorems in case of multifunctions $\vG$ that are HKP-integrable or weakly McShane integrable in $cb(X)\,,cwk(X)$ or $ck(X)$. }
\end{rem}
 By the previous decompositions we obtain:
 \begin{thm}\label{T3}
 Let $\vG:[0,1]\to{c(X)}$ be a DP-integrable multifunction.
\begin{enumerate}
\item[\upshape \ref{T3}.i)]
If  $\mcS_{HKP}(\vG)\neq\emp$, then $\vG$ is HKP-integrable.
\item[\upshape \ref{T3}.ii)]
If  $\mcS_{wMS}(\vG)\neq\emp$, then $\vG$ is wMS-integrable.
\item[\upshape \ref{T3}.iii)]
If  $\mcS_P(\vG)\neq\emp$, then $\vG$ is Pettis integrable.
\end{enumerate}
\end{thm}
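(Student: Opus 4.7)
The plan is to deduce all three statements directly from the decomposition Theorem \ref{T1} (together with Remark \ref{r1}). The common observation is that an HKP-, $wMS$- or Pettis integrable selection of $\vG$ is in particular $DP$-integrable, so it belongs to $\mcS_{DP}(\vG)$ and may be used as the selection $f$ in the decomposition of Theorem \ref{T1}.

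For \ref{T3}.i), fix $f\in\mcS_{HKP}(\vG)\subset\mcS_{DP}(\vG)$. Theorem \ref{T1} yields $G:=\vG-f\in\mathbb{P}(cb(X))$; denote its Pettis primitive by $M_G$ and the HKP primitive of $f$ by $F(I):=(HK)\int_I f\,d\lambda$. Define $N_\vG(I):=M_G(I)\oplus\{F(I)\}$ for $I\in\mcI$. Because the second summand is a singleton, $N_\vG(I)$ is a translate of $M_G(I)$ and hence lies in $cb(X)$. Using the identity $s(x^*,A\oplus B)=s(x^*,A)+s(x^*,B)$, one computes
\begin{equation*}
s(x^*,N_\vG(I))=\int_I s(x^*,G)\,d\lambda+(HK)\int_I\langle x^*,f\rangle\,d\lambda=(HK)\int_I s(x^*,\vG)\,d\lambda,
\end{equation*}
which gives HKP-integrability of $\vG$.

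Cases \ref{T3}.ii) and \ref{T3}.iii) follow along the same lines. In (ii), a selection $f\in\mcS_{wMS}(\vG)$ is simultaneously HKP-integrable and scalarly Lebesgue integrable, so the HKP-construction above applies and in addition $s(x^*,\vG)=s(x^*,G)+\langle x^*,f\rangle$ is visibly Lebesgue integrable, yielding $wMS$-integrability. In (iii), with $f\in\mcS_P(\vG)$, one sets $N_\vG(E):=M_G(E)\oplus\{\int_E f\,d\lambda\}$ for $E\in\mcL$, and the same support-function identity delivers Pettis integrability.

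The one delicate point is that the Minkowski sum of two members of $cb(X)$ need not be closed in a general Banach space, so one cannot a priori place $N_\vG$ inside $cb(X)$. This obstacle is bypassed because the second summand in the sum is always a singleton---the point integral of $f$---so the sum is simply a translate of a $cb(X)$-set and remains in $cb(X)$. Apart from this observation, the argument is a mechanical verification of support-function identities and poses no further difficulty.
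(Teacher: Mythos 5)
Your proposal is correct and follows the paper's own route: pick the given selection $f$ (which is in particular $DP$-integrable), apply Theorem \ref{T1} to get $G=\vG-f$ Pettis integrable, and recombine $\vG=G+f$ to inherit HKP-, $wMS$- or Pettis integrability. You merely spell out the support-function verification and the (harmless) closedness issue for the translate, which the paper leaves implicit.
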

\begin{proof} (\ref{T3}.i) If $\vG$ is DP-integrable and $f\in\mcS_{HKP}(\vG)$, then, according to Theorem \ref{T1}, $\vG=G+f$, where $G$ is Pettis integrable. Being Pettis integrable, $G$ is also HKP integrable, what yields HKP integrability of $\vG$.
(\ref{T3}.ii) and (\ref{T3}.iii) can be proved in a similar way.
\end{proof}
In case of $cwk(X)$-valued $\vG$ and HKP integrable $\vG$, the necessary decomposition was proved in \cite[Theorem 1]{dm2}.\\

Now we are going to concentrate on a particular family
of positive multifunctions: the class of
 multifunctions that are determined by  integrable functions. Such multifunctions  quite often serve as examples and counterexamples.
It is interesting to know which properties of the function can be transferred to the generated multifunction.
\begin{prop}\label{p12}
If $\vG$ is determined by a scalarly  measurable $f$, then it is Pettis integrable in $cwk(X)$ if and only if $f$ is Pettis integrable.
\end{prop}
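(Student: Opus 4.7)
The starting observation is that for $\vG(t)=\mathrm{conv}\{0,f(t)\}$ we have
$s(x^*,\vG(t))=\langle x^*,f(t)\rangle^+$, so scalar integrability of $\vG$ in the sense of the paper is just integrability of the positive parts $\langle x^*,f\rangle^+$ for every $x^*\in X^*$. I will prove the two implications separately.

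For the easier direction ($\vG$ Pettis integrable in $cwk(X)\Rightarrow f$ Pettis integrable), I would first note that applying scalar integrability with $-x^*$ gives $s(-x^*,\vG(\cdot))=\langle x^*,f\rangle^-\in L^1$. Combining this with $\langle x^*,f\rangle^+\in L^1$ shows that $\langle x^*,f\rangle$ is Lebesgue integrable for every $x^*\in X^*$, so $f$ is scalarly measurable and is in particular a scalarly measurable selection of the $cwk(X)$-valued Pettis integrable multifunction $\vG$. Proposition \ref{T2} then delivers Pettis integrability of $f$ for free.

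For the reverse direction ($f$ Pettis integrable $\Rightarrow \vG$ Pettis integrable in $cwk(X)$), let $\nu_f\colon\mcL\to X$ be the Pettis indefinite integral of $f$. For each $A\in\mcL$ I would define
\[
M_\vG(A)\;:=\;\ov{\mathrm{conv}}\bigl(\{0\}\cup\{\nu_f(B):B\in\mcL,\;B\subset A\}\bigr),
\]
and claim $M_\vG(A)\in cwk(X)$ and $s(x^*,M_\vG(A))=\int_A\langle x^*,f\rangle^+\,d\lambda$ for all $x^*$. The support-function identity is a direct computation: for fixed $x^*$ the supremum of $\langle x^*,\nu_f(B)\rangle=\int_B\langle x^*,f\rangle\,d\lambda$ over $B\subset A$ is attained at $B=A\cap\{\langle x^*,f\rangle>0\}$, giving $\int_A\langle x^*,f\rangle^+\,d\lambda$, and the maximum with $0$ is absorbed since the integrand is nonnegative.

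The genuine obstacle is the weak compactness of $M_\vG(A)$. Here I would invoke the classical fact that the range $\{\nu_f(B):B\in\mcL\}$ of a Pettis indefinite integral is relatively weakly compact in $X$ (this is due to Stefanski/Huff and is standard in the Pettis theory); adjoining $\{0\}$ preserves this, and the Krein–\v{S}mulian theorem then shows that $M_\vG(A)$, being the closed convex hull of a relatively weakly compact set, lies in $cwk(X)$. Once this is established, the finite additivity of $A\mapsto M_\vG(A)$ follows from the identity of support functions and the fact that $s(x^*,\cdot)$ separates elements of $cwk(X)$, and $\sigma$-additivity in the $\sigma(X,X^*)$ sense follows by dominated convergence applied scalarly. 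This yields Pettis integrability of $\vG$ in $cwk(X)$ and completes the equivalence.
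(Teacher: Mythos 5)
Your proof is correct, but the converse direction is a genuinely different argument from the paper's. The paper disposes of both implications by citation: the forward direction follows from \cite[Corollary 2.3]{ckr2}, and the converse from \cite[Theorem 2.6]{mu15}, using only the pointwise domination $|s(x^*,\vG(t))|\leq|\langle x^*,f(t)\rangle|$. Your forward direction is essentially the same idea in different clothing (you invoke the selection result, Proposition \ref{T2} from \cite{dm2}, which applies since $f$ is a scalarly measurable selection of the Pettis integrable $\vG$). Your converse, however, is a self-contained construction: you exhibit the integral explicitly as $M_\vG(A)=\ov{\mathrm{conv}}\bigl(\{0\}\cup\{\nu_f(B)\colon B\in\mcL,\ B\subset A\}\bigr)$, verify the support-function identity by the elementary computation $\sup_{B\subset A}\int_B\langle x^*,f\rangle\,d\lambda=\int_A\langle x^*,f\rangle^+\,d\lambda=\int_A s(x^*,\vG)\,d\lambda$, and get membership in $cwk(X)$ from the relative weak compactness of the range of the indefinite Pettis integral together with Krein--\v{S}mulian. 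That compactness fact is indeed standard (it follows from the Orlicz--Pettis theorem, which makes $\nu_f$ a norm countably additive vector measure, combined with the Bartle--Dunford--Schwartz theorem; the attribution to Stefanski/Huff is inessential), so there is no gap. What your route buys is transparency and independence from \cite{mu15}: it identifies the set-valued Pettis integral of $\vG$ concretely as the closed convex hull of the sub-range of $\nu_f$ over $A$, whereas the paper's route is shorter but opaque, outsourcing the existence of the integral sets to a general domination theorem. Your closing remarks on additivity and $\sigma$-additivity are not needed for Pettis integrability in the sense of the paper's definition (they follow automatically from the support-function identity), but they are harmless.
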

\begin{proof}
Observe first that $\vG$ is scalarly measurable.
If $\vG$ is Pettis integrable, then $f$ is Pettis integrable  by \cite[Corollary 2.3]{ckr2}. Viceversa, if $f$ is Pettis integrable, by  \cite[Theorem 2.6]{mu15}
$\vG$ is  Pettis integrable in $cwk(X)$, since $|s(x^*,\vG(t))| \leq |(x^*,f(t))|$.
\end{proof}
If one investigates multifunctions that are integrable in $cb(X)$ the situation is  more complicated. If $f:[0,1]\to{X}$ is strongly measurable and scalarly integrable, then the multifunction determined by $f$ is Pettis integrable in $cb(X)$ (see \cite[Theorem 3.7]{eh}).
An example of $c_0$--valued function $f$ that is not Pettis integrable but $\vG:[0,1]\to{ck(c_0)}$ defined by $\vG(t)={\rm conv}\{0,f(t)\}$ is Pettis integrable in $cb(c_0)$ can be found in \cite[Example 1.12]{mu15}.
The same example can be used to show that DP-integrability of $f:[0,1]\to{X}$ does not guarantee the DP-integrability in $cwk(X)$ of $\vG$ determined by $f$. Indeed,  it follows from Proposition \ref{p1} that $\vG\not\in  \mathbb{DP}(cwk(X))$, since otherwise $f$ would be Pettis integrable.\\

The next result is a strengthening of  \cite[Proposition 4]{dm2}
 in case of a multifunction determined by a function.
\begin{prop}\label{p4}
Let  $f:[0,1] \to{X}$ be scalarly measurable.  If  all scalarly measurable selections of  $\vG$ determined by $f$   are DP--integrable, then $\vG$ is  Pettis integrable in $cwk(X)$.
\end{prop}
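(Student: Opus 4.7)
The plan is to reduce Pettis integrability of $\vG$ to Pettis integrability of the generating function $f$ via Proposition \ref{p12}, and then to squeeze Pettis integrability of $f$ out of the DP-integrability of all its measurable truncations $f\cdot 1_E$. The initial observation is that for every $E\in\mcL$, the map $f\cdot 1_E$ is a scalarly measurable selection of $\vG$: indeed $f(t)1_E(t)\in\{0,f(t)\}\subset\vG(t)$, and for every $y^*\in X^*$ the function $y^*(f\cdot 1_E)=(y^*f)\cdot 1_E$ is measurable. By the hypothesis, each such $f\cdot 1_E$ is DP-integrable.

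The crucial step is to upgrade scalar Denjoy integrability to scalar Lebesgue integrability of $x^*f$ for every $x^*\in X^*$. Fix $x^*\in X^*$ and set $A_{x^*}:=\{t\in[0,1]:x^*f(t)>0\}$, which is measurable since $x^*f$ is. Then $f\cdot 1_{A_{x^*}}$ is a scalarly measurable selection of $\vG$, hence DP-integrable by hypothesis, so the real function $x^*(f\cdot 1_{A_{x^*}})=(x^*f)^+$ is Denjoy integrable. Being non-negative, it is Lebesgue integrable (cf. \cite[Theorem 7.7]{Gor}). The same argument applied to the complement gives Lebesgue integrability of $(x^*f)^-$, so $x^*f$ is Lebesgue integrable for every $x^*\in X^*$.

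With scalar Lebesgue integrability in hand, Pettis integrability of $f$ falls out. For an arbitrary $E\in\mcL$, the DP-integrability of $f\cdot 1_E$ produces, taking the interval $I=[0,1]$, a vector $v_E\in X$ with
\begin{equation*}
x^*(v_E)=(D)\int_{[0,1]}(x^*f)\cdot 1_E\,d\lambda=\int_E x^*f\,d\lambda\qquad\text{for every }x^*\in X^*,
\end{equation*}
where the second equality uses that $x^*f$ is Lebesgue integrable, so the Denjoy and Lebesgue integrals agree. This is exactly Pettis integrability of $f$ with primitive $M_f(E)=v_E$. Invoking Proposition \ref{p12}, $\vG$ is Pettis integrable in $cwk(X)$.

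The main obstacle I anticipate is the bookkeeping of selection-measurability when the cut-off set $A_{x^*}$ is allowed to depend on the functional $x^*$: one must be careful that $f\cdot 1_{A_{x^*}}$ really falls within the family covered by the hypothesis, namely that it is a scalarly measurable selection, and that its DP-integral value can in fact be read as a Pettis integral of $f$. Once the non-negative-Denjoy-equals-Lebesgue reduction is correctly executed, the remainder of the argument is a direct matter of unpacking the definitions.
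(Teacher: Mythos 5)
Your proof is correct and takes essentially the same route as the paper: the truncations $f\cdot 1_E$ are scalarly measurable selections of $\vG$, hence DP-integrable, this forces Pettis integrability of $f$, and Proposition \ref{p12} then gives Pettis integrability of $\vG$ in $cwk(X)$. The only difference is that you spell out the middle step (splitting into $(x^*f)^{+}$ and $(x^*f)^{-}$ via the cut-off sets and using that a non-negative Denjoy integrable function is Lebesgue integrable), which the paper leaves implicit in the phrase ``It follows that $f$ is Pettis integrable.''
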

\begin{proof}
If $E\in\mcL$, then $\wt{f}:[0,1]\to{X}$ defined by $\wt{f}(t)=f(t)$ if $t\in{E}$ and zero otherwise is a DP-integrable selection of $\vG$. It follows that $f$ is Pettis integrable. The assertion follows from Proposition \ref{p12}.
 \end{proof}
 \section{Gauge integrals}
 In case of positive multifunctions with weakly compact values and integrals, it has been proven   in  \cite[Propositions  3.1 and 4.1]{cdpms2016b} that Henstock (resp. $\mcH$) integrability  implies McShane (resp. Birkhoff) integrability. In the  general case of $cb(X)$ valued multifunctions,   we do not know if positive Henstock or $\mcH$-integrable multifunctions are in fact McShane or Birkhoff integrable. We do not know even if positive Pettis and Henstock or $\mcH$-integrable multifunctions are in fact McShane or Birkhoff integrable. But if we assume something on the Banach space $X$ or  we require something more on the multifunction, then the result in \cite{cdpms2016b} can be generalized.\\

 First we need  one supplementary fact.
\begin{prop}\label{p11}
 If $X$ does not contain any isomorphic copy of $c_0$, then $M:\mcL\to{cb(X)}$ is an h-multimeasure if and only if it is a multimeasure.
\end{prop}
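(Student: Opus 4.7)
The forward implication is immediate: by the estimate $|s(x^*,A)-s(x^*,B)|\le\|x^*\|\,d_H(A,B)$, each scalar set function $\nu_{x^*}:=s(x^*,M(\cdot))$ inherits countable additivity from $d_H$-countable additivity of $M$. For the converse the plan is to combine the R\aa dstr\"om embedding with the Bessaga--Pe\l czy\'nski theorem---the latter being exactly where the hypothesis $c_0\not\subset X$ enters.

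Assume $M$ is a multimeasure and fix pairwise disjoint $(E_k)\subset\mcL$ with $E:=\bigcup_k E_k$; set $S_n:=\bigoplus_{k\le n}M(E_k)$ and $R_n:=M(\bigcup_{k>n}E_k)$. Finite additivity yields $S_n\oplus R_n=M(E)$, which under $i$ becomes $i(S_n)+i(R_n)=i(M(E))$, so
$$
d_H(S_n,M(E))=\|i(R_n)\|_{\infty}=\sup_{x^*\in B_{X^*}}|s(x^*,R_n)|=|R_n|.
$$
Thus the task reduces to showing $|R_n|\to 0$. For each fixed $x^*$ this is immediate from countable additivity of $\nu_{x^*}$; the content is to obtain decay uniform in $x^*\in B_{X^*}$.

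The key ingredient is that for any selectors $y_k\in M(E_k)$, the bound $|x^*(y_k)|\le|\nu_{x^*}(E_k)|+|\nu_{-x^*}(E_k)|$, combined with countable additivity of $\nu_{\pm x^*}$, gives
$$
\sum_k|x^*(y_k)|\le|\nu_{x^*}|(E)+|\nu_{-x^*}|(E)<\infty,
$$
so $\sum_k y_k$ is weakly unconditionally Cauchy in $X$. By Bessaga--Pe\l czy\'nski (using $c_0\not\subset X$) it converges unconditionally in norm; in particular $\|y_k\|\to 0$, and maximising over selectors forces $|M(E_k)|\to 0$. Iterating the identity $R_n=M(E_{n+1})\oplus R_{n+1}$, each $y\in R_n$ admits a decomposition $y=\sum_{k>n}x_k$ with $x_k\in M(E_k)$: the remainders $y-\sum_{n<k\le N}x_k$ lie in $R_N$ and tend weakly to $0$ by scalar countable additivity, and convergence is then boosted to norm by the Bessaga--Pe\l czy\'nski step. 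To finish I argue by contradiction: if $\|y_j\|\ge\delta$ for some $y_j\in R_{n_j}$ with $n_j\to\infty$, the selector-series representations of the $y_j$'s are amalgamated into a single WUC series in $X$ whose tails fail to be uniformly norm-small, contradicting Bessaga--Pe\l czy\'nski. The genuine obstacle is precisely this last uniformity step---turning pointwise countable additivity of $\{\nu_{x^*}:x^*\in B_{X^*}\}$ into uniform countable additivity---which is essentially a vector-valued Orlicz--Pettis/Diestel--Faires statement applied to the R\aa dstr\"om image $i\circ M:\mcL\to l_{\infty}(B_{X^*})$, and it is here that the $c_0$-avoidance hypothesis is used decisively.
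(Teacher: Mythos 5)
Your skeleton is the same as the paper's: bound $\langle x^*,\sum_k x_k\rangle$ between $-s(-x^*,\cdot)$ and $s(x^*,\cdot)$ of finite Minkowski sums to see that every selector series $\sum_k x_k$, $x_k\in M(E_k)$, is weakly unconditionally Cauchy, and then invoke Bessaga--Pe{\l}czy\'nski with $c_0\nsubseteq X$ to get unconditional norm convergence. Up to that point (including the reduction $d_H\bigl(\bigoplus_{k\le n}M(E_k),M(E)\bigr)=|R_n|$ and the representation of each $y\in R_n$ as a convergent selector series, modulo the small care needed if $\oplus$ denotes the \emph{closed} Minkowski sum, where the iteration $R_n=M(E_{n+1})\oplus R_{n+1}$ only gives $\varepsilon$-accurate decompositions) your argument is sound and parallels the paper, which at the corresponding stage defines $\vD(E)=\{\sum_i x_i\colon x_i\in M(E_i)\}$ and appeals to the argument of Hu--Papageorgiou, Theorem 8.4.10, to identify $\vD(E)=M(E)$ and conclude.

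The genuine gap is exactly the step you yourself flag as ``the genuine obstacle'': passing from convergence of each individual selector series to the uniform statement $|R_n|\to 0$. Your proposed fix does not work as stated. First, the ``amalgamation'' of the representations $y_j=\sum_{k>n_j}x^{(j)}_k$ into a single selector series is not possible directly, because these representations overlap on infinitely many indices $k$, and a single series allows only one choice of $x_k\in M(E_k)$; one must first truncate each $y_j$ to a \emph{finite} block $F_j\subset(n_j,N_j]$ still carrying norm $\ge\delta/2$, pass to a subsequence so that the blocks $F_j$ are pairwise disjoint, and only then amalgamate (filling the remaining indices with arbitrary selectors); unconditional convergence of the amalgamated series then contradicts $\bigl\|\sum_{k\in F_j}x^{(j)}_k\bigr\|\ge\delta/2$. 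Second, the alternative you offer --- ``a vector-valued Orlicz--Pettis/Diestel--Faires statement applied to $i\circ M:\mcL\to l_{\infty}(B_{X^*})$'' --- is not available: $l_{\infty}(B_{X^*})$ contains isomorphic copies of $c_0$ (indeed of $\ell_\infty$), so no hypothesis on the target space is at hand, and the countable additivity you know for $i\circ M$ is only against the evaluation functionals $\delta_{x^*}$, which form a separating but not norming-in-the-required-sense subfamily of the dual, so neither Orlicz--Pettis nor Diestel--Faires applies; the hypothesis $c_0\nsubseteq X$ concerns $X$, not the R{\aa}dstr\"om target. With the disjoint-finite-block amalgamation spelled out (which is in substance what the cited Hu--Papageorgiou argument does), your proof closes; without it, the decisive uniformity claim is unproved.
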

\begin{proof} Let us notice first that the fact that $M$ is defined on $[0,1]$ endowed with Lebesgue measure is totally unimportant. It may be defined on an arbitrary measure space.

Assume that $M$ is a multimeasure and let $\{E_i:i\in\N\}$ be a sequence of measurable and pairwise disjoint sets in $[0,1]$. Take arbitrarily $x_i\in{M(E_i)},\;i\in\N$ and $x^*\in{X^*}$. If $\pi$ is a permutation of $\N$  and $m\leq{n}$, then
$$
-s\biggl(-x^*,\sum_{i=m}^nM(E_{\pi(i)})\biggr)\leq\biggl\langle{x^*,\sum_{i=m}^nx_{\pi(i)}}\biggr\rangle\leq
s\biggl(x^*,\sum_{i=m}^nM(E_{\pi(i)})\biggr)\,.
$$
It follows that the sequence $\biggl\{\sum_{i=1}^nx_{\pi(i)}\biggr\}_n$ is weakly Cauchy and consequently the series $\sum_{n=1}^{\infty}x_n$ is weakly unconditionally Cauchy. But as $c_0\nsubseteq{X}$ the series is unconditionally convergent in the norm of $X$ due to Bessaga-Pe{\l}czy\'{n}ski result \cite{bp} (cf. \cite[Theorem V.8]{di}). Set $\vD(E):=\biggl\{\sum_{i\geq 1}x_i\colon x_i\in{M(E_i)}\biggr\}$. Exactly as in the proof of Theorem \cite[Theorem 8.4.10]{hp} one can prove that $\vD(E)=M(E)$ for every $E\in\mcL$ and that will complete the whole proof.
\end{proof}

So we have:
\begin{thm}\label{p8}
Let  $\vG:[0,1]\to{cb(X)}$.
Then, $\vG \in \mathbb{MS}(cb(X))$ (resp. $\vG\in \mathbb{BI}(cb(X))$) if and only if
 $\vG \in \mathbb{P}_s(cb(X))$ and $\vG \in \mathbb{H}(cb(X))$ (resp.  $\vG\in  \mathcal{H}(cb(X))$).
 \end{thm}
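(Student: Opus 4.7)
The plan is to prove the two directions separately and to exploit the R{\aa}dstr\"{o}m embedding $i:cb(X)\to l_{\infty}(B_{X^*})$ in the harder direction, so as to reduce the multivalued statement to the known single-valued characterization \textit{Pettis $+$ Henstock $=$ McShane}.

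\textbf{Forward direction ($\mathbb{MS}\Rightarrow\mathbb{P}_s+\mathbb{H}$).} Henstock integrability follows at once, since every Perron partition is a McShane partition and the same gauge works; for the Birkhoff case, the gauge is measurable, so one lands in $\mathcal H(cb(X))$. To obtain strong Pettis integrability, I would use the Lipschitz estimate $|s(x^*,A)-s(x^*,B)|\le\|x^*\|\,d_H(A,B)$ together with the identities $s(x^*,\alpha A\oplus\beta B)=\alpha s(x^*,A)+\beta s(x^*,B)$ for $\alpha,\beta\ge 0$. These transfer any McShane Riemann-sum estimate for $\vG$ into a corresponding estimate for the real function $s(x^*,\vG)$, which is therefore McShane (hence Lebesgue) integrable, with integral $s(x^*,\vPh_\vG([0,1]))$. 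Thus $\vG$ is scalarly integrable, $\vPh_\vG(E)\in cb(X)$ for every $E\in\mathcal L$, and $\vPh_\vG=M_\vG$. A standard gauge-refinement Cauchy argument then shows $M_\vG$ is countably additive in $d_H$, so $\vG\in\mathbb{P}_s(cb(X))$.

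\textbf{Reverse direction ($\mathbb{P}_s+\mathbb{H}\Rightarrow\mathbb{MS}$).} I would set $g:=i\circ\vG:[0,1]\to l_{\infty}(B_{X^*})$. By property~2) of the R{\aa}dstr\"{o}m embedding, Hausdorff approximations in $cb(X)$ become uniform-norm approximations in $l_{\infty}(B_{X^*})$, so $g$ is Henstock integrable with primitive $i\circ\vPh_\vG$. Strong Pettis integrability of $\vG$ makes $i\circ M_\vG$ norm-countably additive on $\mathcal L$, while the identity $g(\cdot)(x^*)=s(x^*,\vG(\cdot))$ together with $s(x^*,M_\vG(E))=\int_E s(x^*,\vG)\,d\lambda$ gives the coordinate Pettis relations needed to upgrade $g$ to a Pettis integrable Banach-space valued function in $l_\infty(B_{X^*})$. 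At this point one invokes the single-valued theorem that a function which is both Pettis and Henstock integrable is McShane integrable. Therefore $g$ is McShane integrable in $l_{\infty}(B_{X^*})$. Since by property~3) the cone $i(cb(X))$ is closed in $l_{\infty}(B_{X^*})$, the McShane primitive of $g$ takes values in $i(cb(X))$, and the isometry $i$ pulls this back to a McShane primitive of $\vG$ in $cb(X)$. The Birkhoff/$\mathcal H$ statement is obtained in parallel by confining all gauges to be measurable.

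\textbf{Main obstacle.} The delicate step is moving from ``$\vG$ is strongly Pettis'' to a genuine Pettis integrability statement for $g$ in $l_{\infty}(B_{X^*})$: the dual $l_{\infty}(B_{X^*})^*$ is much larger than the span of point evaluations $\delta_{x^*}$, so the hypothesis controls $g$ only against a distinguished subset of functionals. The way to bypass this is to observe that the Fremlin-type characterization really uses not Pettis testing against the whole dual but only norm-countable additivity of a set-valued primitive together with weak measurability against a norming subset, and these are supplied exactly by strong Pettis integrability of $\vG$ plus the fact that the point evaluations norm $l_{\infty}(B_{X^*})$ on $i(cb(X))$. A secondary but routine technical issue is checking that the resulting McShane primitive of $g$ stays inside the closed cone $i(cb(X))$; this is where property~3) is crucial.
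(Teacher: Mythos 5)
Your proposal is correct and follows essentially the same route as the paper: compose with the R{\aa}dstr\"{o}m embedding, use strong Pettis integrability to get norm countable additivity of the primitive (equivalently, convergence of the series of integrals over non-overlapping intervals), and then invoke Fremlin's single-valued characterization of McShane integrability for Henstock integrable functions (\cite[Corollary 9(iii), Theorem 8]{f1994}, together with the measurable-gauge version for the Birkhoff/$\mathcal{H}$ case). In particular, the ``bypass'' you describe in your last paragraph is exactly the paper's argument: one never needs Pettis integrability of $i\circ\vG$ against all of $l_{\infty}(B_{X^*})^*$, only the norm convergence of $\sum_n (H)\int_{I_n} i\circ\vG$, which strong Pettis integrability supplies.
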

 \begin{proof}
If $\vG$ is strongly Pettis integrable the range of $(P)\int\vG$ via the R{\aa}dstr\"{o}m embedding is a vector measure.  Now we follow the proof of \cite[Proposition 3.1]{cdpms2016b}.
In fact, we can observe that  $(P)\int_I\vG = (H)\int_I\vG$ for every $I\in\mcI$.  The strong integrability  guarantees the convergence of each series $\sum_n (H) \int_{I{}_n} i \circ \vG$,
where $(I{}_n){}_n$ is any sequence of pairwise non-overlapping subintervals of $[0,1]$,
since  $(H)\int_I i \circ \vG =i\circ ((H)\int_I\vG)=i\circ (P)\int_I\vG$,  for every $I\in\mcI$.
Applying now \cite[Corollary 9 (iii)]{f1994} we obtain McShane integrability of $i \circ \vG$.
 If $\vG$ is $\mcH$-integrable,  we can apply
\cite[Theorem 8]{{f1994}} and  \cite[Theorem 2.11]{cdpms2016b}.
 \end{proof}
\begin{prob}
What is the situation if $\vG$ is strongly Pettis and variationally Henstock integrable?
\end{prob}
 Even in  single valued case $\vG$ need not be variationally McShane integrable. An example is given in \cite{dp-ma}.
\begin{thm} \label{c1}
Let  $\vG:[0,1]\to{cb(X)}$.
If $c_0\nsubseteq{X}$, then  $\vG \in \mathbb{MS}(cb(X))$ (resp. $\vG\in \mathbb{BI}(cb(X))$) if and only if  $\vG \in \mathbb{P}(cb(X))$ and $\vG \in \mathbb{H}(cb(X))$ (resp.  $\vG\in  \mathcal{H}(cb(X))$).
 \end{thm}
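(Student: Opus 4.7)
The plan is to reduce Theorem~\ref{c1} to Theorem~\ref{p8} by establishing the identity $\mathbb{P}(cb(X))=\mathbb{P}_s(cb(X))$ under the hypothesis $c_0\nsubseteq X$. Once this identity is in place, Theorem~\ref{p8} (both its McShane and Birkhoff halves) translates word for word into Theorem~\ref{c1}: one simply replaces ``strongly Pettis'' by ``Pettis'' throughout.

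First I would unravel the definitions. If $\vG\in\mathbb{P}(cb(X))$, its Pettis primitive $M_{\vG}:\mcL\to cb(X)$ satisfies
$s(x^{*},M_{\vG}(A))=\int_{A}s(x^{*},\vG)\,d\lambda$
for every $A\in\mcL$ and every $x^{*}\in X^{*}$. In particular, each scalar set function $s(x^{*},M_{\vG}(\cdot))$ is a countably additive signed measure, so $M_{\vG}$ is a multimeasure in the sense of Section~2.

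Next I would invoke Proposition~\ref{p11}: under the hypothesis $c_0\nsubseteq X$, every $cb(X)$-valued multimeasure defined on $\mcL$ is automatically an $h$-multimeasure. Applied to $M_{\vG}$, this says exactly that $M_{\vG}$ is countably additive in the Hausdorff metric $d_H$, which is the defining property of $\vG\in\mathbb{P}_s(cb(X))$. The reverse inclusion $\mathbb{P}_s(cb(X))\subseteq\mathbb{P}(cb(X))$ is trivial from the definitions, so $\mathbb{P}(cb(X))=\mathbb{P}_s(cb(X))$.

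With this identity in hand, Theorem~\ref{p8} immediately gives Theorem~\ref{c1} in both the McShane and Birkhoff versions. I do not foresee a genuine obstacle here: the substantive content has already been concentrated in Proposition~\ref{p11} (which rests on the Bessaga--Pe\l{}czy\'nski theorem on unconditional convergence in spaces not containing $c_0$) and in Theorem~\ref{p8} (the strong-Pettis version of the equivalence). The remaining work is just the glueing step described above.
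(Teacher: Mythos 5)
Your proposal is correct and follows essentially the same route as the paper: the paper's proof also notes that under $c_0\nsubseteq X$ Proposition~\ref{p11} upgrades the Pettis primitive (a multimeasure) to an $h$-multimeasure, so that $\vG\in\mathbb{P}_s(cb(X))$, and then applies Theorem~\ref{p8}. Your additional remark that the converse inclusion $\mathbb{P}_s(cb(X))\subseteq\mathbb{P}(cb(X))$ is trivial is exactly the implicit step in the paper's two-line argument.
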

 \begin{proof}
 If $c_0\nsubseteq{X}$ then, by Proposition \ref{p11}, $\vG \in
\mathbb{P}_s(cb(X))$. We apply Theorem \ref{p8}.
 \end{proof}
We outline that 	$\vG \in
\mathbb{P}_s(cb(X))$ or  $c_0\nsubseteq{X}$
are key ingredients  in Theorem \ref{p8} and Theorem \ref{c1}. Due to Theorem \ref{p1} we know that if  $\vG \in \mathbb{H}(cb(X))$  is positive, then it is Pettis integrable. It remains an open question if 		there exist a positive Henstock integrable multifunction $\vG:[0,1]\to{cb(c_0)}$ that is  not strongly Pettis integrable.\\

If $\vPh:\mcI\to{cb(X)}$ is an additive multifunction, then given $I\in\mcI$, the variation of $\vPh(I)$ is defined by
$$
\wt\vPh(I):=\sup\{\sum_i\|\vPh(I_i)\|\colon \{I_1,\ldots,I_n\} \;\text{is a finite partition of }I\}\,.
$$
If $\,\wt\vPh[0,1]<\infty$, then $\vPh$ is said to be of finite variation. In this case Theorem \ref{p8} has a stronger form.
\begin{thm}\label{T11}
Let $\vG:[0,1]\to{cb(X)}$ be Henstock (or $\mcH$) integrable and let $\vPh_{\vG}$ be its H ($\mcH$)-integral. If $\wt{\vPh}_{\vG}[0,1]<\infty$, then $\vG$ is McShane (or Birkhoff) integrable.
\end{thm}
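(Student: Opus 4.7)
The plan is to mirror the proof of Theorem \ref{p8}, with the finite-variation hypothesis on $\vPh_\vG$ taking over the role played there by strong Pettis integrability; the one place this role is used is in ensuring the convergence of the series $\sum_n(H)\!\int_{I_n} i\circ \vG$, and finite variation will give this convergence even more cheaply, namely absolutely.

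First I would transport everything through the R\aa dstr\"om embedding $i:cb(X)\to l_{\infty}(B_{X^*})$. Set $g:=i\circ\vG$. Then $g$ is Henstock (resp.\ $\mcH$) integrable on $[0,1]$ with primitive $\vPh_g=i\circ\vPh_\vG:\mcI\to l_\infty(B_{X^*})$. Property (2) of the embedding together with $i(\{0\})=0$ gives $\|\vPh_g(I)\|_\infty=|\vPh_\vG(I)|$ for every $I\in\mcI$, so the variation of $\vPh_g$ equals $\wt\vPh_\vG[0,1]$ and is therefore finite.

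Next, for any sequence $(I_n)_n$ of pairwise non-overlapping subintervals of $[0,1]$ one has
\[
\sum_n\bigl\|(H)\!\int_{I_n} g\bigr\|_\infty=\sum_n\|\vPh_g(I_n)\|_\infty\le \wt\vPh_\vG[0,1]<\infty,
\]
so the series $\sum_n(H)\!\int_{I_n} g$ is absolutely, hence unconditionally, convergent in $l_\infty(B_{X^*})$. Inserting this into the proof of Theorem \ref{p8}, one may now apply \cite[Corollary 9(iii)]{f1994} (resp.\ \cite[Theorem 8]{f1994} together with \cite[Theorem 2.11]{cdpms2016b}) verbatim to conclude that $g$ is McShane (resp.\ Birkhoff) integrable on $[0,1]$, with a primitive $\Theta:\mcL\to l_\infty(B_{X^*})$ extending $\vPh_g$. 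To transfer this back, observe that every partial sum $\sum_j g(t_j)|I_j|=i\!\bigl(\sum_j\vG(t_j)|I_j|\bigr)$ lies in the closed cone $i(cb(X))$ by property (3) of $i$, and $\Theta(E)$ is a limit of such sums, so $\Theta(E)\in i(cb(X))$ for every $E\in\mcL$. Inverting $i$ yields the McShane (resp.\ Birkhoff) integrability of $\vG$ in $cb(X)$.

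The main obstacle, as in Theorem \ref{p8}, is obtaining unconditional summability in the non-separable space $l_\infty(B_{X^*})$ (which contains copies of $c_0$, so Bessaga--Pe\l czy\'nski is unavailable). The finite-variation assumption is precisely what bypasses this difficulty, by providing \emph{absolute} norm summability for free; everything else is a routine reduction to the machinery already developed for Theorem \ref{p8}.
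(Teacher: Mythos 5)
Your proposal is correct and follows essentially the same route as the paper: pass to $i\circ\vG$ via the R{\aa}dstr\"om embedding, use the finite variation of $\wt\vPh_{\vG}$ to get absolute (hence unconditional) convergence of the series $\sum_n(H)\int_{I_n}i\circ\vG$ over non-overlapping intervals, and then invoke the Fremlin-type criterion already used in Theorem \ref{p8} before pulling the result back to $cb(X)$. The extra details you supply (the equality of variations and the closed-cone argument for returning from $l_\infty(B_{X^*})$ to $cb(X)$) are exactly what the paper's terse proof leaves implicit.
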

 \begin{proof} By the assumption $i\circ\vG$ is Hennstock ($\mcH$) integrable.  Consequently,  if $(I_n)_n$ is a sequence of non-overlapping subintervals of $[0,1]$ then,  due to the finite variation of $\wt\vPh_{\vG}$, the series $\sum_n(H)(\mcH)\int_{I_n}i\circ\vG$ is absolutely convergent in $l_{\infty}(B_{X^*})$, hence also convergent.  Thus, $i\circ\vG$ is McShane (Birkhoff) integrable and,  this yields McShane (Birkhoff) integrability of $\vG$.
 \end{proof}

 We are going to present now a decomposition theorem for multifunctions that are H (resp. $\mcH$)-integrable in $cb(X)$. While for weakly compact valued multifunctions properly integrable selections exist (see \cite{dp} for the  Henstock  or the McShane integral, \cite{cdpms2016} for the Birkhoff or the  variational Henstock integral), we do not know if that is the case also for $cb(X)$-valued multifunctions.
 In order to obtain a decomposition of H or $\mcH$-integrable multifunction, we have to assume that the set of suitably integrable selections is non-void.\\

 Moreover, we do not know if positive Henstock or $\mcH$-integrable multifunctions are in fact McShane or Birkhoff integrable (as it was proved in \cite[Lemma 3.1 and 4.1]{cdpms2016b} for weakly compact valued multifunctions). We do not know even if positive Pettis and Henstock or $\mcH$-integrable multifunctions are in fact McShane or Birkhoff integrable. Therefore the theorem below differs from \cite[Theorem 3.3 and 4.3]{cdpms2016b}.
 \begin{thm} \label{T5}
Let $\vG:[0,1]\to {cb(X)}$ be
 multifunction such that ${\mathcal S}_{H}(\vG)\neq\emp$ (${\mathcal S}_{\mathcal H}(\vG)\neq\emp$). Then the following conditions are equivalent:
\begin{enumerate}
	\item[{\upshape \ref{T5}.i)}]
	$\vG$ is  $H$-integrable  (resp. ${\mathcal{H}}$-integrable) in $cb(X)$;
	\item[{\upshape  \ref{T5}.ii)}]
	For all $f\in {\mathcal S}_{H}(\vG)$ (resp. $f\in {\mathcal S}_{\mcH}(\vG)$), the multifunction $G:[0,1]\to cb(X)$ defined by  $G=\vG-f$ is  Pettis and Henstock integrable (resp. Pettis and $\mcH$-integrable) in $cb(X)$;
	\item[{\upshape \ref{T5}.iii)}]
	There exists such an $f\in {\mathcal S}_{H}(\vG)$ (resp. $f\in {\mathcal S}_{\mcH}(\vG)$) that the multifunction $G:[0,1]\to cb(X)$ defined by  $G=\vG-f$ is   Pettis and Henstock integrable (resp. Pettis and $\mcH$-integrable) in $cb(X)$;
\end{enumerate}
\end{thm}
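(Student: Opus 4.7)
The plan is to mirror the proof of Theorem \ref{T1}, placing Pettis integrability of the positive remainder $G = \vG - f$ at the heart of the argument via Theorem \ref{p1}. The implication (ii) $\Rightarrow$ (iii) is automatic from the hypothesis $\mathcal{S}_H(\vG) \neq \emp$ (resp.\ $\mathcal{S}_{\mcH}(\vG) \neq \emp$), and (iii) $\Rightarrow$ (i) follows because $\vG(t) = G(t) \oplus \{f(t)\}$ is a pointwise translate of $G(t)$ by the selection, so via the R\aa dstr\"om embedding $i : cb(X) \to l_\infty(B_{X^*})$ the equality $i \circ \vG = i \circ G + i \circ f$ expresses $i \circ \vG$ as a sum of two Henstock (resp.\ $\mcH$) integrable $l_\infty(B_{X^*})$-valued functions, whose integral lies in the closed cone $i(cb(X))$; this gives Henstock (resp.\ $\mcH$) integrability of $\vG$. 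The $\mcH$-case is handled in parallel throughout, since every time I combine gauges I take their pointwise minimum, which is measurable as soon as both ingredients are.

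For the substantive step (i) $\Rightarrow$ (ii), I fix an arbitrary $f \in \mathcal{S}_H(\vG)$ and set $G(t) := \vG(t) \oplus \{-f(t)\}$. This is a $cb(X)$-valued multifunction (a translate of a closed bounded convex set), and it is positive: $f(t) \in \vG(t)$ forces $\langle x^*, f(t)\rangle \leq s(x^*, \vG(t))$, whence $s(x^*, G(t)) = s(x^*, \vG(t)) - \langle x^*, f(t)\rangle \geq 0$ for every $x^* \in X^*$. Henstock (resp.\ $\mcH$) integrability of $G$ follows from linearity of the Banach-valued Henstock integral on the image side of $i$, giving a primitive $\vPh_G(I) = \vPh_\vG(I) \oplus \{-\int_I f\,d\lambda\} \in cb(X)$ for every $I \in \mcI$.

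To promote $G$ from Henstock to Pettis integrability I would apply Theorem \ref{p1}, for which it suffices to verify $G \in \mathbb{DP}(cb(X))$. Applying the continuous point-evaluation $L \mapsto L(x^*)$ on $l_\infty(B_{X^*})$ to the identity $i(\vPh_G(I)) = i(\vPh_\vG(I)) - i(\int_I f\,d\lambda)$ gives
$$
s(x^*, \vPh_G(I)) = (HK)\!\int_I s(x^*, \vG)\, d\lambda - \int_I \langle x^*, f\rangle\, d\lambda = (HK)\!\int_I s(x^*, G)\, d\lambda.
$$
Since $s(x^*, G) \geq 0$ a.e., Gordon's theorem (as cited in the proof of Theorem \ref{p1}) makes $s(x^*, G)$ Lebesgue, and a fortiori Denjoy, integrable, with the restriction of $\vPh_G$ to $\mcI$ serving as a $cb(X)$-valued Denjoy primitive. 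Thus $G$ is a positive element of $\mathbb{DP}(cb(X))$, and Theorem \ref{p1} yields $G \in \mathbb{P}(cb(X))$, completing (ii). The step requiring the most care is the displayed compatibility between the multivalued Henstock primitive and the scalar HK integrals of the support functions in the $cb(X)$ setting, where weak compactness is unavailable; this is handled smoothly by the R\aa dstr\"om embedding, because point-evaluation at $x^*$ is a continuous linear functional on $l_\infty(B_{X^*})$ and so commutes with the Henstock limit.
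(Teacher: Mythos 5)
Your argument is correct and follows essentially the same route as the paper: the substantive implication (i)\,$\Rightarrow$\,(ii) is obtained by observing that $G=\vG-f$ is a positive, Henstock (resp.\ $\mcH$) integrable multifunction, hence scalarly HK- and thus Denjoy-integrable with primitive in $cb(X)$, so that Theorem \ref{p1} upgrades it to Pettis integrability; the remaining implications are handled as the routine ones, just as in the paper. You merely spell out details (the DP-membership of $G$ via point evaluations through the R{\aa}dstr\"{o}m embedding, and the reassembly $\vG=G+f$ for (iii)\,$\Rightarrow$\,(i)) that the paper leaves implicit.
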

\begin{proof} ${\rm \ref{T5}.i)}\Rightarrow {\rm \ref{T5}.ii)}$. If $f\in {\mathcal S}_{H}(\vG)$ (resp. $f\in {\mathcal S}_{\mathcal{H}}(\vG)$), then $G:=\vG-f$ is also H-integrable (resp. $\mcH$-integrable). It follows from Theorem \ref{p1} that $G$ is also Pettis integrable.
\end{proof}
\begin{prob}
Is each positive Pettis integrable multifunction McShane integrable?
\end{prob}
        In case of multifunctions with weakly compact values, each positive H-integrable multifunction  is McShane integrable (see \cite[Proposition 3.1]{cdpms2016b}).   Suppose that positive  H-integrable multifunctions possessing an H-integrable selection are McShane integrable, and let $\vG$ be an H-integrable multifunction possessing an
        MS-integrable selection. Then  $\vG$ can be written as $\vG=G+f$, where $f\in\mcS_{MS}(\vG)$ and $G$ is Pettis integrable. But then $G$ is also Henstock integrable. Consequently, $G$ is McShane integrable, and also  $\vG$ is.\\

 In the general case the  following questions remain an open problem:
\begin{prob}
 Is each positive  H-inte\-grable multifunction (possessing an H-inte\-grable selection)  McShane integrable?
            Is each positive Pettis integrable multifunction strongly Pettis integrable?
\end{prob}
\begin{rem} \rm
Finally it is worth to note that in  all previous  results concerning the representation of a multifunction $\varGamma$ as a sum of one of its selections and a positive multifunction,  it is sufficient to have a  {\it  quasi selection} $f$ (cf. \cite{mu15}), i.e. such a function $f$ that $x^* f \le s(x^*,\varGamma)$
a.e. for each  $x^* \in X^*$ separately. In fact, if $f$ is a quasi selection, the  multifunction $\varGamma - f$ is a.e. positive.
\end{rem}
\section*{Acknowledgement}
The authors thank Vladimir Kadets and the anonymous referee for  their suggestions concerning  the research topic.

\end{document}